\newtheorem{thm}{Theorem}[section]
\newtheorem{rem}[thm]{Remark}
\newtheorem{lem}[thm]{Lemma}
\newtheorem{prop}[thm]{Proposition}
\newtheorem{defn}[thm]{Definition}
   \newtheoremstyle{example}{\topsep}{\topsep}%
     {}
     {}
     {\bfseries}
     {}
     {\newline}
     {\thmname{#1}\thmnumber{ #2}\thmnote{ #3}}
   \theoremstyle{example}
   \newtheoremstyle{conjecture}{\topsep}{\topsep}%
     {}
     {}
     {\bfseries}
     {}
     {\newline}
     {\thmname{#1}\thmnumber{ #2}\thmnote{ #3}}
   \theoremstyle{conjecture}
   \newtheoremstyle{question}{\topsep}{\topsep}%
     {}
     {}
     {\bfseries}
     {}
     {\newline}
     {\thmname{#1}\thmnumber{ #2}\thmnote{ #3}}
   \theoremstyle{question}
\newcommand{\nR}{\mathbb R}
\newcommand{\nRR}{\mathbb R^4}
\newcommand{\nSS}{\mathbb S^4}
\newcommand{\cB}{\mathcal B}
\begin{document}
\title{Gluck twist on a certain family of 2-knots}

\author{Daniel Nash}
\address{
Alfr\'ed R\'enyi Institute of Mathematics,
Hungarian Academy of Sciences \newline
Re\'altanoda u. 13-15, 1053 Budapest, Hungary}
\email{dnash@renyi.hu}

\author{Andr\'as I. Stipsicz}
\address{
Alfr\'ed R\'enyi Institute of Mathematics,
Hungarian Academy of Sciences \newline
Re\'altanoda u. 13-15, 1053 Budapest, Hungary}
\email{stipsicz@renyi.hu}

\date{}

\begin{abstract}
We show that by performing the Gluck twist along the 2-knot $K^2_{pq}$ 
derived from two ribbon presentations of the ribbon 1-knot $K(p,q)$ we get 
the standard 4-sphere $\nSS$. In the proof we apply Kirby calculus.
\end{abstract}
\maketitle

\section{Introduction}
The paper of Freedman-Gompf-Morrison-Walker \cite{FGMW} about the
potential application of Khovanov homology in solving the 4-dimensional smooth 
Poincar\'e conjecture (SPC4) revitalized this important subfield of topology. A sequence
of papers appeared, some settling 30-year-old problems (\cite{Ak1, Gom1}), some
introducing new potential exotic 4-spheres (\cite{Nash}) and further works showing that
the newly introduced examples are, in fact, standard \cite{Ak2, Tange}. 

One underlying construction for producing examples of potential exotic 4-spheres is the Gluck 
twist along an embedded $S^2$ (a \emph{2-knot}) in the standard 4-sphere $\nSS$. In this construction we remove
the tubular neighbourhood of the 2-knot and glue it back with a specific diffeomorphism.
(For a more detailed discussion, see Section~\ref{sec:Gluckdefs}.)
 In turn, any 2-knot in $\nSS$ admits a 
\emph{normal form}, and hence can be described by an ordinary knot in $S^3$, together with 
two sets of ribbon bands (determining the `southern' and `northern' hemispheres of the
2-knot). Applying standard ideas of Kirby calculus (see, for example, \cite{GS99}) the
complement of a 2-knot, and from there the result of the Gluck twist, can be explicitly
drawn. From such a presentation then we derive the following result.

\begin{figure}[h]
\begin{center}
\includegraphics[height=6cm]{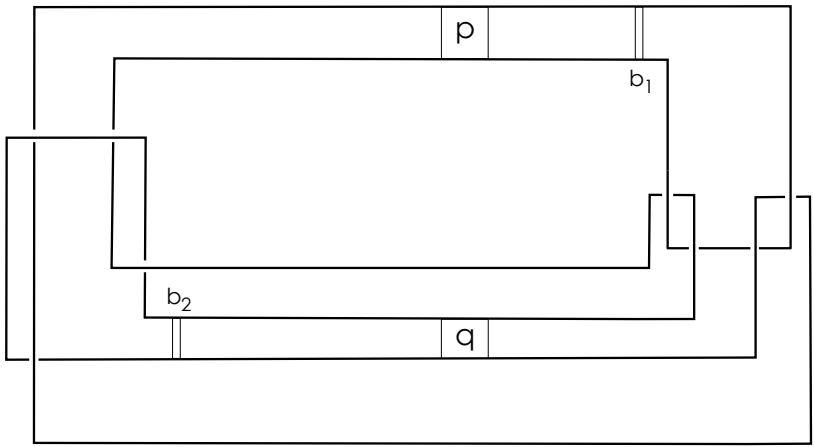}
\caption{The knot $K(p,q)$ with the two ribbon bands $b_1$ and $b_2$, giving rise to the
2-knot $K^2_{pq}\subset \nSS$.}
\label{fig:Kpqribbons}
\end{center}
\end{figure}
\begin{thm}\label{t:main}
Consider the knot $K(p,q)$ depicted by Figure~\ref{fig:Kpqribbons}, and use the bands $b_1$
and $b_2$ to construct the southern and northern hemispheres of a 2-knot
$K^2_{pq}\subset \nSS$. Then
Gluck twist along the 2-knot $K^2_{pq}$ provides the 4-sphere with its standard 
smooth structure.
\end{thm}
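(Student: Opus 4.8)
The plan is to follow the classical Kirby-calculus recipe for computing the Gluck-twisted complement and then simplify the resulting handlebody to the empty diagram (equivalently, to the standard $\nSS$). First I would record the normal form data: the knot $K(p,q)$ of Figure~\ref{fig:Kpqribbons} together with the two band sets $b_1$ and $b_2$. The southern hemisphere built from $b_1$ realizes $K^2_{pq}$ as a ribbon $2$-knot, so its complement in $\nSS$ has a handle decomposition with one $1$-handle for each band of $b_1$ and $2$-handles coming from the meridians of $K(p,q)$; the northern bands $b_2$ then prescribe how the remaining $2$- and $3$-handles are attached. I would draw this explicitly, obtaining a framed link in $S^3$ (dotted circles for $1$-handles, $0$- or appropriately framed circles for $2$-handles). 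Performing the Gluck twist amounts, at the level of this diagram, to inserting a single extra $-1$-framed (or $+1$-framed, depending on orientation conventions) meridional circle around the belt sphere of the $2$-knot's tubular neighbourhood — concretely, a $\pm1$-framed unknot linking the cocore circle. This is the standard ``Gluck $=$ surgery on a circle'' picture (cf.\ \cite{GS99}), and it is the honest starting diagram for $\Sigma = $ (Gluck twist on $K^2_{pq}$).

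Next I would simplify. The work is entirely Kirby calculus: slide the $2$-handles coming from the meridians of $K(p,q)$ over one another using the structure of the bands $b_1$, cancel $1$-handle/$2$-handle pairs as they become available, and use the extra $\pm1$-framed Gluck circle to absorb or unknot the leftover linking. The key point — and the reason the specific family $K(p,q)$ is chosen — is that $K(p,q)$ has \emph{two} ribbon presentations, so the bands $b_1$ give one $1$-handle/$2$-handle cancellation pattern while $b_2$ give another; the Gluck circle should interpolate between the two, so that after handle slides dictated by $b_1$ and the inverse slides dictated by $b_2$ the diagram collapses. I expect the simplification to proceed in stages: (i) use $b_1$ to cancel all but a controlled core of handles; (ii) blow down / handle-slide the Gluck circle through this core; (iii) recognize the remainder as the diagram one would get from $b_2$ alone, which by symmetry is also a ribbon complement and hence collapses to $\natural^k(S^1\times S^3)$-type pieces that pair off with the $3$-handles; (iv) conclude that $\Sigma$ is built from a $0$-handle, canceling pairs, and a $4$-handle, i.e.\ $\Sigma \cong \nSS$.

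A cleaner packaging of the same idea, which I would use if the direct calculus gets unwieldy: instead of tracking the $3$-handles, compute $\Sigma$ minus a $4$-handle as a $4$-manifold with boundary $S^3$ and show its handle diagram simplifies to one with no handles; then there is a unique way to cap off with a $4$-handle (Laudenbach–Poénaru), giving $\nSS$. Either way the essential input is that the two band systems $b_1,b_2$ are related by an explicit sequence of slides, together with the one extra degree of freedom provided by the Gluck circle.

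The main obstacle I anticipate is bookkeeping in step (i)–(ii): keeping the framings, the linking with the $1$-handle dotted circles, and the orientation of the Gluck circle all consistent through a long sequence of handle slides, especially where the bands $b_1$ and $b_2$ interact nontrivially with the $p$ and $q$ crossings of $K(p,q)$. In particular I would be careful that the Gluck circle is placed as a genuine meridian of the normal $S^1$-direction of $K^2_{pq}$ (not of some handle that later gets cancelled), since getting that wrong changes $\Sigma$. Once the diagram is reduced to a $\pm1$-framed unknot split from everything else, blowing it down finishes the argument; verifying that one does reach that split unknot is where the real content — and the role of the specific family $K(p,q)$ — lies.
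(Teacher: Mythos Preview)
Your overall strategy---build an explicit Kirby diagram for the complement, implement the Gluck twist by a $\pm1$-framed modification, then simplify---is exactly what the paper does. But your description of the handle decomposition is wrong in ways that would derail the computation before it starts. The $1$-handles (dotted circles) do not correspond to the bands of $b_1$; they correspond to the \emph{components of the unlink} obtained after cutting $K(p,q)$ along $b_1$. Since $b_1$ is a single band and cutting produces a two-component unlink, there are \emph{two} dotted circles, not one. The $2$-handles do not come from ``meridians of $K(p,q)$''; each band contributes a $0$-framed $2$-handle whose attaching circle runs along push-offs of the band's core (for $b_1$) or cocore (for $b_2$). With the wrong handle counts and wrong attaching circles you will not reach the correct starting diagram (the paper's Figure~\ref{fig:KpqUnion}).

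A second point: the paper's main proof does not use a single Gluck circle. It uses the alternative description (from \cite{Go88}, recalled in Section~\ref{sec:Gluckdefs}) in which the Gluck twist is realized by blowing down \emph{both} dotted circles, one as a $(+1)$ and the other as a $(-1)$. Because $\vert\cB_1\vert=\vert\cB_2\vert=1$, this immediately eliminates all $1$-handles and leaves a two-component framed link (plus two $3$-handles and a $4$-handle). After a short isotopy and a \emph{single} handle slide, the two components become a $0$-framed two-component unlink, which cancels the $3$-handles. There is no long sequence of slides, no ``interpolation'' between the two ribbon presentations, and no need to invoke Laudenbach--Po\'enaru separately. (The paper's Appendix does give a variant using one $(-1)$-framed Gluck circle as you propose, but even there the argument is three explicit slides, not the multi-stage program you outline.) The actual content of the proof is the concrete diagram and the one slide; your proposal has the right shape but none of that content, and the framework you set up for it is miscounted.
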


\begin{rem}
{\rm
For certain choices of $p$ and $q$ the 1-knot $K(p,q)$ can be identified more familiarly:  for instance, $K(0,0)$ is isotopic to $F\# F=F\# m(F)$, 
where $F$ is the Figure-8 knot (isotopic to its mirror image $m(F)$),
$K(1,-1)$ is the $8_9$ knot, while $K(1,1)$ is $10_{155}$ in the standard knot tables. Notice that in \cite{AK1}
the knot $8_9$ defines the 2-knot along which the Gluck twist is performed, 
although the bands used in \cite{AK1} are potentially different 
from $b_1$ and $b_2$ used in the theorem above, cf. \cite[Fig. 16]{AK1}.}
\end{rem}

Before we prove the above result, in Sections~\ref{sec:2knots} and 
\ref{sec:Gluckdefs}
we briefly invoke basic facts about 2-knots, the 
Gluck twist, and the derivation of a Kirby diagram for the result of the Gluck twist along 
a 2-knot given by a ribbon 1-knot and two sets of ribbons.
In Section~\ref{sec:proof} then a simple  Kirby calculus argument 
provides the proof of  Theorem~\ref{t:main}. (A slightly different argument,
still within Kirby calculus, for the same result is given in an Appendix.)

\bigskip

{\bf Acknowledgements}: The authors would like to acknowledge support by the 
\emph{Lend\"ulet} program of the Hungarian Academy of Sciences.
The second author was partially supported by OTKA Grant NK 81203.
 We also want to thank
Zolt\'an Szab\'o for numerous enlightening discussions.

\section{Ribbon 2-disks and related 2-knots}
\label{sec:2knots}

Every 2-knot is equivalent to one in normal form \cite{CKS04}, that is, for 
a 2-knot $K\subset \nSS$ there is an ambiently isotopic  $K' \approx S^2 \subset \nRR$ (i.e. $\nSS \setminus \infty)$ with a  projection $p: \nRR = \nR^3 \times \nR \rightarrow \nR$ such that $p$ restricted to $K'$ gives a Morse function with the properties:
\begin{enumerate}
\item $K' \subset \nR^3 \times [-c,c]$ some $c>0$,
\item all index-0 critical points are in $K' \cap \nR^3 \times \left\{-c\right\}$,
\item all index-1 critical points with negative $p$-value give fusion bands within $K' \cap \nR^3 \times (-c, 0)$,
\item $K' \cap \nR^3 \times \left\{0\right\}$ is a single 1-knot $k$,
\item all index-1 critical points with positive $p$-value
give fission bands within $K' \cap \nR^3 \times (0, c)$, and 
\item all index-2 critical points are in $K' \cap \nR^3 \times \left\{c\right\}$.
\end{enumerate}
In particular, this means that any 2-knot $K$ is formed from the union of two \emph{ribbon 2-disks} glued together along their boundaries which is the same (ribbon) 1-knot $k$ for both.  Since such a (ribbon disk) hemisphere $D$ of a 2-knot has a handlebody with only 0- and 1-handles, we can construct a Kirby diagram for any ribbon disk complement in the 4-disk $D^4$, and from there
for any 2-knot in $\nSS$ as follows (cf. \cite[Chapter~6]{GS99}). 

\begin{lem}\label{lem:1}
Let $K$ be a 2-knot given by the union of two ribbon disks with 
equatorial 1-knot $k$, 
lower hemisphere ribbon presentation $ \cB_1 = \left\{b_1, b_2, \dots, b_m \right\}$, and upper hemisphere ribbon presentation $\cB_2 = \left\{b_1 ', b_2 ', \dots, b_n ' \right\}$, as above.  
Then a handlebody for $\nSS \setminus K$ can be constructed by the 
following algorithm:
\begin{enumerate}
\item at each $\cB_1$ ribbon, split from $k$ into a dotted circle component and add a 2-handle as in the left diagram of Figure~\ref{fig:ribbonhandles},
\item at each $\cB_2$ ribbon, add the 2-handle as in the right diagram of 
Figure~\ref{fig:ribbonhandles}, and
\item add a 3-handle for each ribbon of $\cB_2$ and then a single 4-handle.
\end{enumerate}	
\end{lem}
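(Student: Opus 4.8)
The plan is to verify Lemma~\ref{lem:1} by writing down an explicit handle decomposition of the 4-sphere relative to the 2-knot $K$, starting from the movie/normal-form description of $K$ and dualizing. Recall that $\nSS = D^4 \cup_{\partial} D^4$, and we want to isolate a tubular neighbourhood $\nu K \cong S^2 \times D^2$ of $K$ inside $\nSS$, so that $\nSS \setminus K \simeq \nSS \setminus \nu K = \overline{D^4 \setminus \nu D_-} \cup (\text{collar})$, where $D_-$ is the lower ribbon 2-disk hemisphere. Concretely, I would think of $\nSS$ as built by first taking the complement of the lower ribbon disk $D_-$ in $D^4$, then attaching the upper disk $D_+$ together with the handles needed to cap off to a ball and close up to $\nSS$. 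Since each ribbon disk has a handle structure with a single 0-handle and one 1-handle per band, the dual picture is what produces the dotted circles and 2-handles.

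The key steps, in order, would be: (1) Describe $k \subset S^3 = \partial D^4$ together with the bands $\cB_1$ as a ribbon presentation of $D_-$; pushing $D_-$ into $D^4$, its complement $D^4 \setminus \nu D_-$ has a handlebody obtained by Akbulut's standard recipe for ribbon disk complements (this is exactly \cite[Chapter~6]{GS99}): each band $b_i$ contributes a dotted-circle/1-handle and a 0-framed 2-handle, which is precisely step (1) and the left diagram of Figure~\ref{fig:ribbonhandles}. (2) Observe that $\nu K$ restricted over $D_-$ is a 4-ball, and $\nSS \setminus \nu K$ is obtained from $D^4 \setminus \nu D_-$ by gluing in the upper hemisphere's data turned upside down. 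Attaching the 2-knot's upper disk $D_+$ along the same $k$ but viewed from the other side means adding, for each band $b_j'$ of $\cB_2$, a 2-handle along the band curve (the right diagram of Figure~\ref{fig:ribbonhandles}), because from the complement's viewpoint a 1-handle of $D_+$ becomes a 2-handle. (3) Finally, $D_+$ itself together with $\nu K \cap D^4_{\text{upper}}$ must be filled in to recover all of $\nSS$ minus only the solid-torus-bundle neighbourhood: each 0-handle of $D_+$ (there is one, but after the isotopy that separates the $n$ bands one effectively accounts for them per-band in the dual) dualizes to a 3-handle, and the ambient $S^4$ contributes one 4-handle; this is step (3). I would check the framings and the linking data of the attaching circles directly from the band crossings in the movie, confirming the 0-framings and the dotted-circle conventions match Figure~\ref{fig:ribbonhandles}.

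The main obstacle I expect is bookkeeping the \emph{duality} correctly: making sure that ``1-handles of the upper ribbon disk'' really do turn into ``2-handles attached along the band cores,'' and that the count of 3-handles is $n$ (one per $\cB_2$ band) rather than $m$ or $1$, together with exactly one 4-handle. This requires carefully tracking which part of $\nSS$ is being built from the bottom up versus which part is being removed, and orienting the Morse function $p$ from the normal form so that index-2 critical points of $K$ (the tops of the upper ribbon disk) correspond to the 3-handles in the complement. A secondary subtlety is verifying that the attaching circles of the $\cB_2$ 2-handles link the dotted circles and the $\cB_1$ 2-handles in the way dictated by the bands' positions relative to $k$ — but this is a direct, if tedious, reading of Figure~\ref{fig:ribbonhandles} against the ribbon movie, and once the duality is set up correctly it follows from the standard handle-trading arguments of \cite[Chapter~6]{GS99}.

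Once these pieces are assembled, the handlebody for $\nSS \setminus K$ is exactly the one produced by the three-step algorithm, proving the lemma. I would then note that this diagram is the starting point for the Gluck-twist computation: re-gluing $\nu K$ by the nontrivial diffeomorphism of $S^2 \times D^2$ amounts to a prescribed modification (adding a single 2-handle with a specified framing, meridional to $K$) of this handlebody, which is the setup used in Section~\ref{sec:Gluckdefs} and exploited in the proof of Theorem~\ref{t:main}.
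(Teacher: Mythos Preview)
Your outline follows essentially the same strategy as the paper: build $D^4\setminus\nu D_-$ via the standard ribbon-complement recipe, then attach the dualized handles of $D^4\setminus\nu D_+$ on top. Where you diverge is in treating the placement of the $\cB_2$ attaching circles as a ``secondary subtlety'' to be read off directly from the band picture. In the paper this is actually the main point of the argument in the non-doubled case $D_1\neq D_2$: one first analyses the doubled 2-knot $D\cup\overline D$ (where the upside-down 2-handles manifestly sit on the co-cores of the original bands), and then for the general case observes that $\partial(D^4\setminus D_1)$ and $\partial(D^4\setminus D_2)$ are \emph{both} canonically identified with $0$-surgery on the equatorial knot $k$ (by changing dots to zeros, sliding, and cancelling). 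It is this specific diffeomorphism that lets one pull back the $\cB_2$ 2-handle attaching circles into the $\cB_1$-based diagram and see them as in the right half of Figure~\ref{fig:ribbonhandles}. Without that identification your step~(2) is an assertion rather than an argument. Your duality bookkeeping for the 3- and 4-handles is otherwise on target, though note that the 3-handles come from the dualized \emph{1-handles} (i.e.\ the 0-handles of $D_+$) of the upper complement, not literally ``one per band''; the statement's count of $n$ reflects a cancellation against the extra 4-handle, as discussed at the end of Section~\ref{sec:Gluckdefs}.
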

\begin{figure}[h]
\begin{center}
\includegraphics[height=3cm]{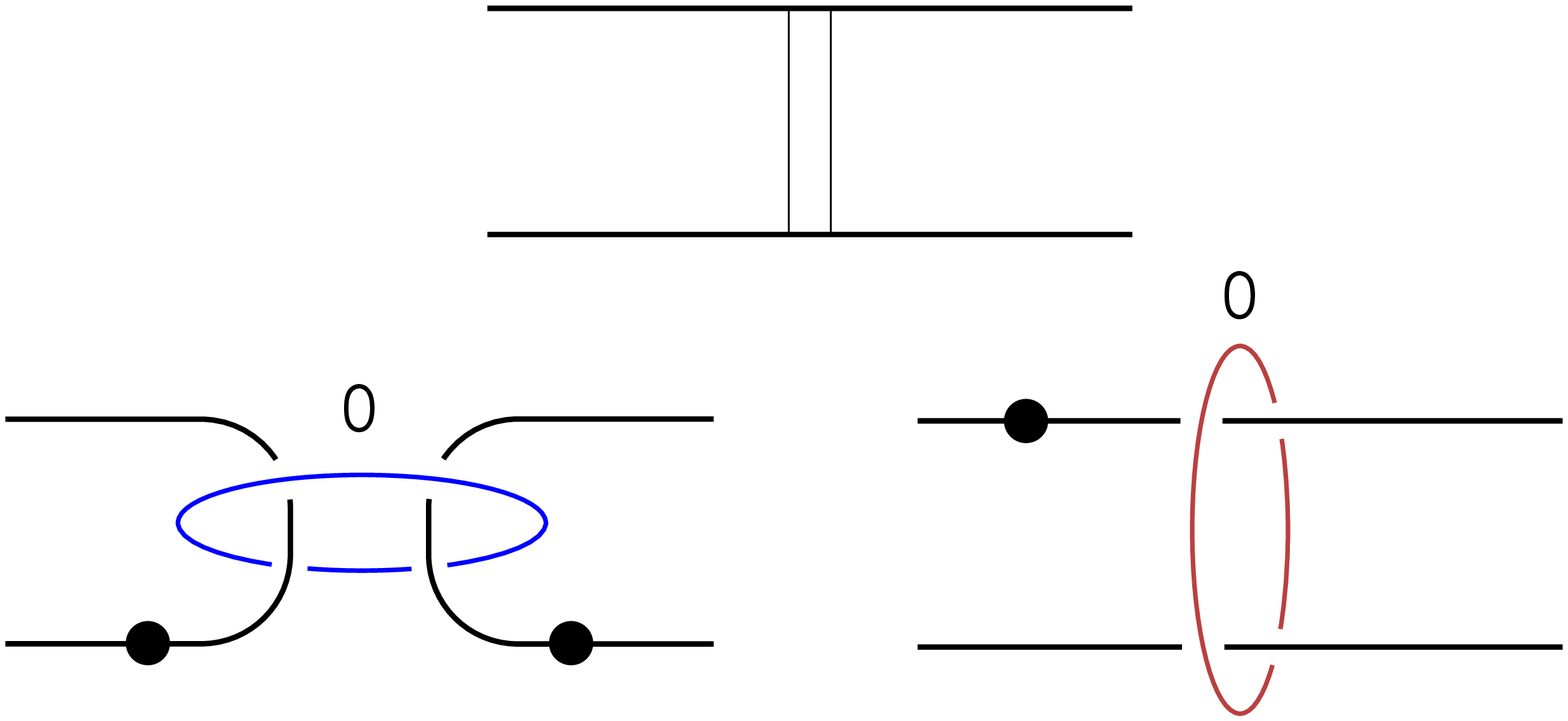}
\caption{Handles from a ribbon move in the lower hemisphere (left) and upper hemisphere (right).}
\label{fig:ribbonhandles}
\end{center}
\end{figure}

\begin{proof}
Let us start with describing the complement of one ribbon disk $D$; let  $X = D^4 \setminus D$ 
denote the ribbon disk complement.  Its handlebody starts with a 0-handle (four ball) $X_0$.  Then, for each 0-handle of $D$ which is carved out, a (4-dimensional) 1-handle is added to $X_0$ to form the 1-handlebody $X_1$.  Finally, the ribbons (or 1-handles) of $D$ each yield 2-handles in the complement, and these are attached along curves formed from the union of push-offs of the core 1-disk of the ribbons (cf. \cite[Section~6.2]{GS99}).  The attaching circles have 0-framings since push-offs of the core do not link. Therefore the result can be easily presented from a
diagram of the equatorial knot $k$ by locally replacing the bands with the 
diagram presented on the left of Figure~\ref{fig:ribbonhandles}.

Next consider the special case when $K$ is the 2-knot which we get by doubling the disk $D$, i.e. $K=D\cup \overline{D}$. In this case 
a Kirby diagram for the knot exterior $Y = \nSS - (D \cup \overline{ D})$ can be built up easily from the handlebody decomposition of $D$.  This amounts to taking the above disk complement $X$ and adding a second ``upside-down'' copy of $X$ (relative to the carved out 2-disk $D$, so that the result is still a manifold with boundary).  For each ribbon in the upper hemisphere, again we add a 2-handle to the complement.  However with $D$ ``turned upside-down'' in the upper hemisphere, the ribbons have cores and co-cores opposite to their counterparts in the lower hemisphere.  Consequently, the 2-handles added in the upper hemisphere's complement have attaching curves formed from the union of two co-cores of the original ribbons of $D$.  This is shown in figure \ref{fig:Doubledribbonhandles}, where a pair of 0-handles of $K$ and a 1-handle fusing them together gives the handlebody configuration in the complement on the right, with the vertical 2-handle depicting the ``upside-down'' copy coming from the upper hemisphere of $K$ ( and ``dotted circles'' depicting the 4-dimensional 1-handles).  Moreover, for each of the upper hemisphere 2-handles
(corresponding to 0-handles of $D$), we
get a 3-handle, and then finally a 4-handle to complete the description of
the complement of $K$.
For a similar discussion see \cite[Exercise~6.2.11(b)]{GS99}.

\begin{figure}[h]
\begin{center}
\includegraphics[width=7cm]{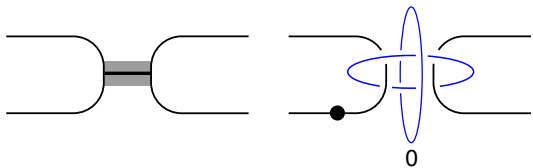}
\caption{Handles in $Y$ from a 1-handle in (each copy of) $D$.}
\label{fig:Doubledribbonhandles}
\end{center}
\end{figure}

Finally consider the general case, when the 2-knot $K$ is formed from
two disks $D_1$ and $D_2$ (as `southern' and `northern' hemispheres);
i.e. $K=D_1\cup \overline{D_2}$.
The recipe above provides a diagram for $D^4-D_1$ and for
$\nSS - (D_2\cup \overline{D_2})$. We only need to replace $D^4-D_2$
with $D^4-D_1$ to get a diagram for $\nSS -K$. This simply amounts to 
finding a diffeomorphism between the boundaries of $D^4-D_1$ and $D^4-D_2$,
and then pulling back the attaching circles of the 2-handles of $D^4-\overline{D_2}$
to the diagram of $D^4-D_1$.  By converting the dots to 0-framings, sliding the
(once dotted, now 0-framed) circles on each other and then canceling 
(in the 3-dimensional sense) the obvious handle pairs, we see that 
both $\partial (D^4-D_1)$ and $\partial (D^4-D_2)$ are diffeomorphic to the result of 
0-surgery
along the equatorial knot $k$. Using this diffeomorphism, the pull-back provides
the attaching circle given in the statement, concluding the proof.
\end{proof}

\section{The Gluck twist and Kirby diagrams}\label{sec:Gluckdefs}

Suppose that $K\subset \nSS$ is a given 2-knot in the 4-sphere.
Remove a normal neighborhood $\nu K$ of  $K \subset \nSS$ from the 4-sphere and reglue
 $S^2 \times D^2$ by the diffeomorphism of the boundary $\partial (S^2 \times D^2) \approx \partial (\nSS \setminus \nu K) \approx S^2 \times S^1$
$$\mu: S^2 \times S^1 \longrightarrow S^2 \times S^1,$$
given by $(x, \theta) \stackrel{\mu}{\mapsto} (rot_{\theta}(x), \theta)$ for $rot_{\theta}$ the rotation with angle $\theta$ of the 2-sphere about the axis through its poles.
\begin{defn}
{\rm
The above construction is called the \emph{Gluck twist}  along the 2-knot $K\subset 
\nSS$. 
The result of the Gluck twist along the 2-knot $K\subset \nSS$ will be denoted
by $\Sigma (K)$.}
\end{defn}
Since the result $\Sigma (K)$ of a Gluck twist is simply connected, Freedman's celebrated theorem implies that $\Sigma (K)$ is  homeomorphic to $\nSS$.

For a 2-sphere $K$ embedded in the 4-sphere, a handlebody for $\nu K$ consists of a 0-handle  plus one 2-handle attached along a 0-framed unknot.  This can also be built ``upside down'' from its boundary $S^2 \times S^1$ by attaching the (dualized) 2-handle $h_K$ along any meridian $\left\{{\mbox {pt.}}\right\} \times S^1$ of the sphere,  
and then attaching the dualized 0-handle as a 4-handle.  Therefore, if a handlebody diagram for the knot exterior $Y = \nSS \setminus \nu K$ is  given, then one can reconstruct $\nSS$ by attaching the 2-handle $h_K$ along a 0-framed meridian 
of any 1-handle $h$ corresponding to a 0-handle of $K$.  
The homotopy sphere $\Sigma(K)$  resulting from the Gluck twist on $K$ then can be formed from $Y$ by 
attaching the 2-handle $h_K$ with $\pm1$-framing along the same
meridional circle of the 1-handle $h$ (see also \cite[Exercise~6.2.4]{GS99}). Note that all the further attaching circles of 2-handles linking the 1-handle $h$ can be slid off $h$ by the use of 
$h_K$, and then $h$ and $h_K$ can be
cancelled against each other. Therefore, in practice the presentation of the Gluck twist along $K$
amounts to blowing down one of the dotted circles corresponding to a 0-handle of $K$
as if the dotted circle was a $(-1)$-framed (or a $(+1)$-framed, up to our choice) unknot. Notice also that in the preceding section we presented a diagram
for $Y$ which admits a 4-handle. Since in gluing $S^2\times D^2$ back we add a 
further 4-handle, one of them can be cancelled against a 3-handle.

In \cite{Go88} a further alternative of the effect of  the Gluck twist
is presented.  Since the rotation $rot_{\theta}$
involved in the gluing map fixes both poles $N$ and $S$ of the 2-sphere, there are two resulting fixed circles 
$\left\{N \right\} \times S^1$ and $\left\{S \right\} \times S^1$ of $\mu$. 
Presenting $S^2\times D^2$ as the union of a 0-handle, a 1-handle and two 2-handles
(or in the upside down picture two 2-handles $h_{K}$ and $h_{K}'$, a 3- and a 4-handle), one can  
construct $\Sigma(K)$ from $Y$ by attaching the two 2-handles 
$h_K$ and $h_K'$ (one along $\left\{N \right\} \times S^1$ and one along $\left\{S \right\} \times S^1$ with framings $(+1)$ and $(-1)$, respectively) and a 3- and 4-handle, where the two attaching circles are meridional circles of 
two dotted circles (corresponding to two 0-handles of $K$). Once again, since the 2-handles can be slid over $h_K$ and $h_K'$, and then these 2-handles can cancel the corresponding dotted circles, in 
practice the Gluck twist along $K$ amounts to simply blowing down 
 two dotted circles as if one were a $(+1)$-,
the other a $(-1)$-framed unknot (and then adding a 3- and a 4-handle). 
As before, one 3-handle cancels one of the two 4-handles appearing in the
decomposition.

In conclusion, if a 2-knot $K$ in normal form is given in $\nSS$ by a ribbon
knot $k$ with two sets of bands $\cB _1$ and $\cB _2$, 
then the above description provides a 
simple algorithmic way of producing a handle decomposition of the result 
$\Sigma (K)$ of the Gluck twist along $K$. Notice furthermore that if $\vert \cB _1\vert =1$
(or $\vert \cB _2\vert =1$) then the resulting decomposition can be chosen not to contain any
1-handles.

\begin{rem} 
{\rm
 For some special classes of 2-knots $K$, the diffeomorphism type of $\Sigma(K)$ is well-understood:  in \cite{Gordon} Gordon proved that $\Sigma(K)$ is diffeomorphic to $\nSS$ for any twist-spun 2-knot $K$.  Then in \cite{Melvin} Melvin showed that every ribbon 2-knot $K$ has $\Sigma(K)$ standard as well.  Additionally, since any ribbon 2-knot is the double of a ribbon 2-disk, \cite[Exercise 6.2.11(b)]{GS99} gives an alternate proof of this second result.}
\end{rem}

\section{A family of 2-knots}
\label{sec:proof}
For $p,q$ (possibly non-distinct) integers let $K(p,q)$ be the knot of
Figure~\ref{fig:Kpqribbons}.  This is a ribbon knot of 1-fusion, that is, there is a ribbon presentation of $K(p,q)$ such that performing the indicated single ribbon move transforms the knot into a two-component unlink.  In fact, there are two apparent choices for the single ribbon move (or two apparently distinct ribbon presentations).  These are indicated by the fine-lined bands $b_i$  ($i=1,2$) of
Figure~\ref{fig:Kpqribbons}.
Either of these ribbon presentations corresponds to a ribbon 2-disk which we will denote by $D(p,q)_i$ ($i=1,2$).  

\begin{defn}
{\rm 
Define the 2-knot $K^2_{pq}$ as the union $(D^4, D(p,q)_1) \cup \overline{(D^4, D(p,q)_2)}.$}
\end{defn}

Following the recipe of Section~\ref{sec:Gluckdefs} we exhibit a handlebody description of the 
result $\Sigma(K^2 _{pq})$ of the Gluck twist along $K^2_{pq}$. 
In doing so, first we present a diagram for $Y_{pq}=\nSS - \nu K^2 _{pq}$ in
Figure \ref{fig:KpqUnion} below:

\begin{figure}[h]
\begin{center}
\includegraphics[height=6cm]{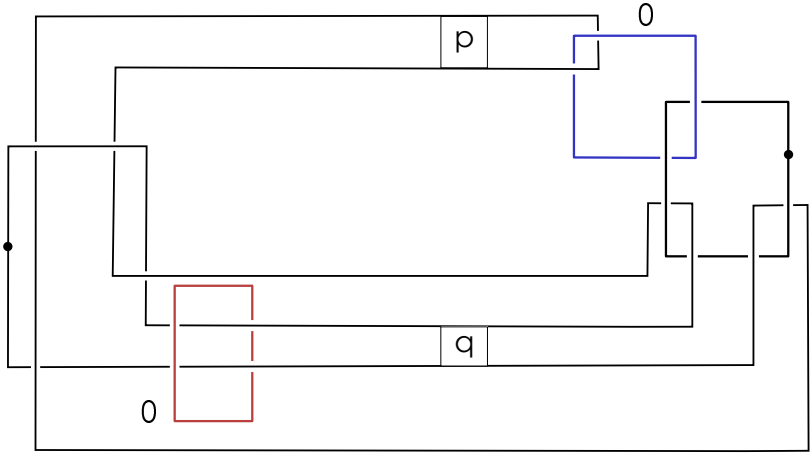}
\caption{Kirby diagram for $Y_{pq}$ minus two 3-handles and one 4-handle.}
\label{fig:KpqUnion}
\end{center}
\end{figure}

Figures~\ref{fig:1Iso1} through \ref{fig:Kpqpi1} demonstrate an isotopy of the above 
diagram of $Y_{pq}$ into a form where the 1-handles are visibly separated.
In Figure~\ref{fig:1Iso1} we have the result of undoing the $p$-twist in the first 1-handle and then starting to isotope the 2-handle through the second 1-handle.  
In Figure~\ref{fig:1Iso3}, the $q$-twist of the second 1-handle is undone by twisting the indicated four strands of the 2-handle.  
Further isotopies then finally produce Figure~\ref{fig:Kpqpi1}, where the
1-handles are conveniently separated.

\begin{figure}[h]
\begin{center}
\includegraphics[height=5cm]{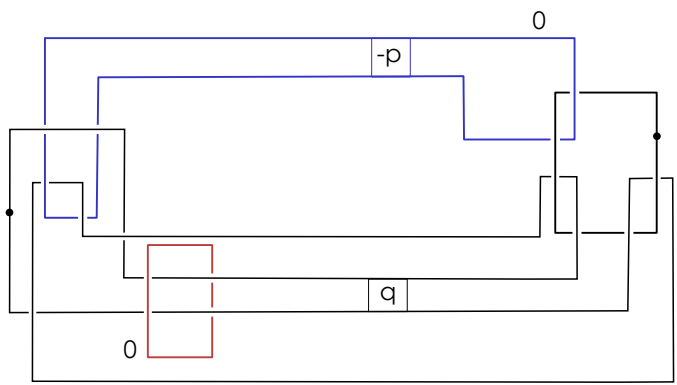}
\caption{Transferring the $p$-twist from the dotted circle to the 0-framed unknot.}
\label{fig:1Iso1}
\end{center}
\end{figure}


\begin{figure}[h]
\begin{center}
\includegraphics[height=5cm]{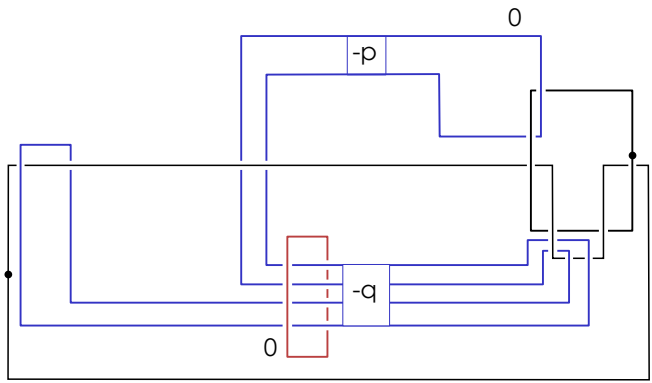}
\caption{A further isotopy of the diagram of Figure~\ref{fig:1Iso1}.}
\label{fig:1Iso3}
\end{center}
\end{figure}

\begin{figure}[h]
\begin{center}
\includegraphics[height=5cm]{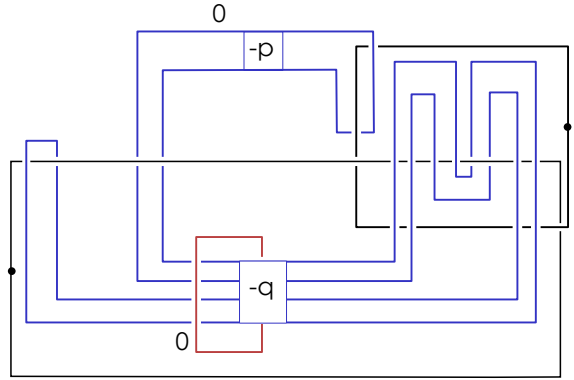}
\caption{Isotopy to separate the dotted circles.}
\label{fig:1Iso4}
\end{center}
\end{figure}


\begin{figure}[h]
\begin{center}
\includegraphics[height=5cm]{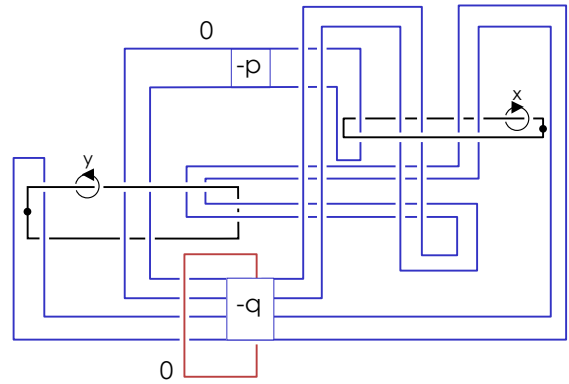}
\caption{The knot complement $Y_{pq}$ (minus two 3-handles and one 4-handle) with 1-handles separated (and generators of $\pi_1$ labeled).}
\label{fig:Kpqpi1}
\end{center}
\end{figure}

With this handlebody depiction of $Y_{pq}$ we can begin to analyze the 2-knot $K^2 _{pq}$, 
compute its knot group directly  and show the following:

\begin{prop}  The two 2-knots $K^2 _{pq}$ and 
$K^2 _{rs}$ are distinct provided that the parities of the pairs 
$\left\{p,q\right\}$, $\left\{r,s\right\}$ (up to reordering within a pair) are distinct.
\end{prop}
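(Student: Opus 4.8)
The plan is to distinguish the 2-knots by their knot groups $\pi_1(Y_{pq})$, where $Y_{pq} = \nSS \setminus \nu K^2_{pq}$. Having the handlebody picture of Figure~\ref{fig:Kpqpi1} with the 1-handles separated and with explicit generators of $\pi_1$ labeled, I would first read off a Wirtinger-type presentation: two generators $x,y$ coming from meridians of the two dotted circles, together with relations coming from the attaching circles of the 2-handles (the $\cB_2$-handles, after the isotopy). The key output of this step is an explicit finite presentation of $G_{pq} \seteq \pi_1(Y_{pq})$ in which the integers $p,q$ enter the exponents of the relators. I expect the relations to have the shape of word identities in the free group on $x,y$ with $p$ and $q$ appearing as exponents of conjugating elements — this is exactly the kind of presentation that ribbon 1-fusion knots produce.

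Next, since two homeomorphic knot complements have isomorphic knot groups, it suffices to show $G_{pq} \not\cong G_{rs}$ when the parity pairs differ. Direct comparison of group presentations is intractable, so the strategy is to extract a parity-sensitive invariant. The natural candidate is the abelianization of the commutator subgroup, or more efficiently the \emph{Alexander module} $H_1$ of the infinite cyclic cover, viewed as a module over $\nZ[t,t^{-1}]$: from the presentation one computes the Alexander polynomial $\Delta_{pq}(t)$ (or the full Alexander ideal / elementary ideals) via the Fox derivatives of the relators. I would compute $\Delta_{pq}(t)$ as a function of $p$ and $q$ and show that its reduction modulo $2$ — or the value $\Delta_{pq}(-1)$ modulo some integer, or the order of $H_1$(double branched cover) — depends precisely on the pair of parities of $\{p,q\}$ and is symmetric under reordering. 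Alternatively, if the Alexander polynomial turns out to be insensitive, one falls back to a metabelian quotient: the first homology of a finite-index cover, or the number of surjections onto a fixed finite group, which is a genuine group invariant.

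Concretely, the cleanest route I would try first: show that $\Delta_{pq}(t)$ (suitably normalized) reduces mod $2$ to one of finitely many polynomials $f_{00}, f_{01}, f_{11}$ according to whether $\{p,q\}$ is even-even, even-odd, or odd-odd, and that these three are pairwise distinct in $\nF_2[t,t^{-1}]$. Then $G_{pq}\cong G_{rs}$ forces $\Delta_{pq}\doteq\Delta_{rs}$ up to units $\pm t^k$, hence equal mod $2$ up to $t^k$, hence the same parity class; combined with the symmetry $\Delta_{pq}=\Delta_{qp}$ this gives exactly the statement. The main obstacle is the first step done carefully: correctly tracking the framings and the over/under data through the isotopy of Figures~\ref{fig:1Iso1}--\ref{fig:Kpqpi1} so that the $\pi_1$-presentation (and the resulting Fox-derivative matrix) is the right one — a sign or exponent error there would corrupt the parity computation. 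A secondary worry is that the Alexander polynomial alone might not separate all three parity classes, in which case the proof must be completed with the more robust but more laborious count of homomorphisms to a small finite group such as $A_5$ or a dihedral group, whose dependence on $p,q\bmod 2$ can be checked by hand.
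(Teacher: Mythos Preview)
Your proposal is correct and matches the paper's approach almost exactly: read off a two-generator presentation of $\pi_1(Y_{pq})$ from Figure~\ref{fig:Kpqpi1}, compute the Alexander polynomial via Fox calculus, and observe that the result separates the three parity classes. Two minor points where reality is simpler than you anticipate: the single relator $r_{pq}$ turns out to depend only on the parities of $p$ and $q$ (not on $p,q$ as exponents), so there are literally just four relator words to write down; and the resulting integral Alexander polynomials are already visibly distinct (up to units and the $\{p,q\}\leftrightarrow\{q,p\}$ symmetry), so no mod~2 reduction or metabelian fallback is needed.
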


\begin{proof}
Choosing orientations on generators of $\pi_1$ as in Figure~\ref{fig:Kpqpi1}, we obtain $\pi_1(Y_{pq}) \cong \langle x,y\  |\  r_{pq} \rangle$, where the relation $r_{pq}$ takes one of the following four forms:
\begin{align*}
&\text{$p$ even, $q$ even}  \quad r_{pq}: \ xyxy^{-1}x^{-1}yxyx^{-1}y^{-1}\\
&\text{$p$ odd,  $q$ odd} \ \ \quad r_{pq}: \ xyxyx^{-1}y^{-1}xy^{-1}x^{-1}y^{-1}\\
&\text{$p$ odd,  $q$ even} \ \quad r_{pq}: \ xyxy^{-1}x^{-1}y^{-1}xyx^{-1}y^{-1}\\
&\text{$p$ even, $q$ odd} \ \quad r_{pq}: \ xyxyx^{-1}yxy^{-1}x^{-1}y^{-1}\\
\end{align*}

Using Fox calculus it is easy to see 
that each 2-knot does have a principal first elementary ideal and hence, an Alexander polynomial:
 
\begin{align*}
&\text{$p$ even, $q$ even} \quad \Delta(t) = -t^2 + 3t -1,\\
&\text{$p$ odd, $q$ odd} \ \ \quad \Delta(t) = 1- t + 2t^2 - t^3,\\
&\text{$p$ odd, $q$ even} \ \quad \Delta(t) = 2- 2t + t^2,\\
&\text{$p$ even, $q$ odd} \ \quad \Delta(t) = 2t^2 - 2t + 1.\\
\end{align*}
This gives three clearly distinguished cases for a pair $\left\{p,q\right\}$.  In particular, $K^2 _{pq}$ and $K^2 _{rs}$ have distinct Alexander polynomials if the pairs of parities are distinct.
\end{proof}

\begin{rem}  
{\rm In the three cases other than $p,q$ both even, $\Delta_K$ is asymmetric and therefore it is not the Alexander polynomial of a 1-knot.  Consequently, $K^2 _{pq}$ cannot be a spun knot if $p,q$ are not both even.}
\end{rem}

Now we are ready to provide a proof of the main result of the paper:

\begin{proof}[Proof of Theorem~\ref{t:main}]
 After isotoping the small 2-handle until it becomes parallel to the rightmost
 dotted circle and blowing down the two dotted circles (as the implementation
 of the Gluck twist demands) we arrive at Figure~\ref{fig:bdGluck}.
 Finally, figure \ref{fig:Gluckfree} (obtained by performing the indicated handle slide in \ref{fig:bdGluck}) unravels to give a pair of disjoint 0-framed 2-handles which cancel against the 3-handles to give $\nSS$.
\end{proof}



\begin{figure}[h]
\begin{center}
\includegraphics[height=5cm]{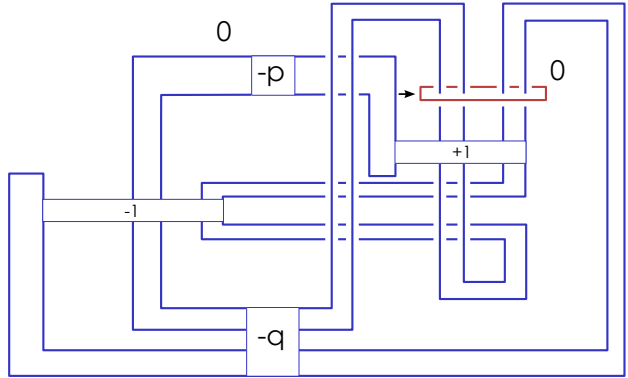}
\caption{The diagram of $\Sigma (K^2 _{pq})$ (minus two uniquely attached 3-handles and one 4-handle).}
\label{fig:bdGluck}
\end{center}
\end{figure}

\begin{figure}[h]
\begin{center}
\includegraphics[height=5cm]{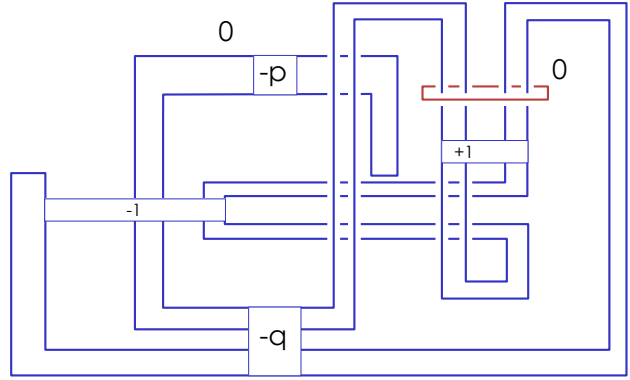}
\caption{After sliding one 2-handle over the other one (as instructed by 
the arrow in Figure~\ref{fig:bdGluck}), we are left with two unlinked 2-handles,
which can be seen to cancel against the 3-handles.}
\label{fig:Gluckfree}
\end{center}
\end{figure}

\section*{Appendix:  Alternate proof of Theorem~\ref{t:main}}
For the particular case of the 2-knot $K^2 _{pq}$ there is, in fact, a way to see that the Gluck twist leaves $\nSS$ standard without separating the 1-handles first.
\begin{proof}
Starting in $Y_{pq}$ (cf. Figure~\ref{fig:KpqUnion}), realize the Gluck twist on $K^2 _{pq}$ by adding a $(-1)$-framed 2-handle to a 1-handle (instead of just immediately blowing down the dotted 1-handle).
Slide the lower 0-framed 2-handle over the upper 1-handle and off of  the $q$-twisted 
1-handle to get Figure~\ref{1stmove}.  Now in Figure~\ref{1stmove} slide the 
$(-1)$-framed 2-handle over the left-most 0-framed 2-handle and off of its 1-handle and the 
right-most 2-handle.  Next, in Figure~\ref{fig:3rdmove} slide the 0-framed 2-handle on the left over the 
$(-1)$-framed 2-handle (which changes its own framing to $-1$) 

and use the remaining 0-framed 2-handle 
to unhook the
other two 2-handles from each other. This results in a collection of
Hopf links, and standard handle cancellations then show that 
$\Sigma (K^2 _{pq})$ is, indeed, diffeomorphic to the standard 4-sphere $\nSS$.

\end{proof}



\begin{figure}[h]
\begin{center}
\includegraphics[height=5cm]{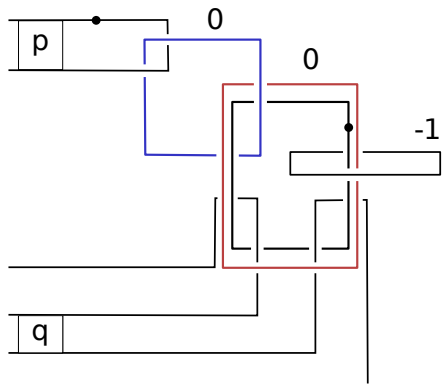}
\caption{The relevant portion of the diagram of $\Sigma (K^2_{pq})$ (again, minus 3- and 4-handles) after one handle slide.}
\label{1stmove}
\end{center}
\end{figure}

\begin{figure}[h]
\begin{center}
\includegraphics[height=5cm]{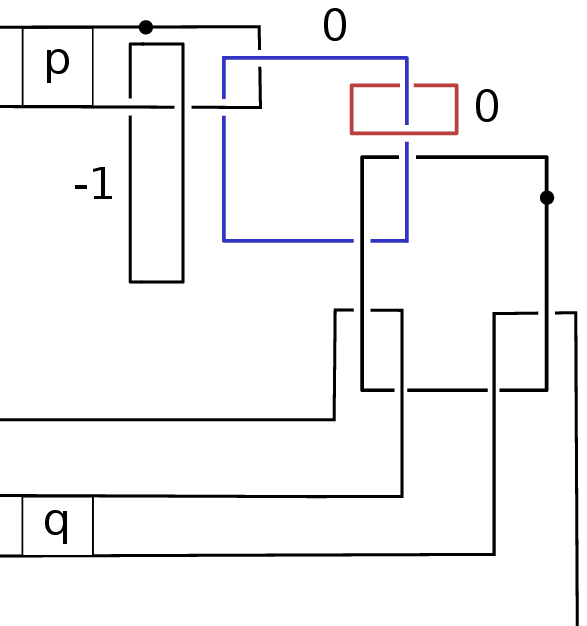}
\caption{The relevant portion of the diagram after sliding the $(-1)$-framed 2-handle and isotoping the right-most 0-framed component.}
\label{fig:3rdmove}
\end{center}
\end{figure}




\bibliographystyle{abbrv}


\begin{thebibliography}{AAAA}

\bibitem{Ak1}
 S. Akbulut,
{\it Cappell-Shaneson homotopy spheres are standard},
arXiv:0907.0136 

\bibitem{Ak2}
S. Akbulut,
{\it Nash homotopy spheres are standard},
arXiv:1102.2683 

\bibitem{AK1}
S. Akbulut and R. Kirby,
{\it A potential smooth counterexample in dimension $4$ to the Poincar\'{e} conjecture, the Schoenflies conjecture, and the Andrews-Curtis conjecture},
 Topology {\bf24} (1985), 375--390.

\bibitem{CKS04}
S. Carter, S. Kamada and M. Saito,
{\it Surfaces in 4-space}, Encyclopaedia of Mathematical Sciences {\bf142}, Springer-Verlag,
Berlin, 2004, Low Dimensional Topology III.


\bibitem{FGMW}
M. Freedman, R. Gompf, S. Morrison and K. Walker,
{\it Man and machine thinking about the smooth 4-dimensional Poincar\'{e} conjecture},
  Quantum Topology {\bf1}  (2010), pp. 171--208. 

\bibitem{Go88}
R. Gompf,
{\it On Cappell-Shaneson 4-spheres}, Topology Appl. {\bf38} (1991), 123--136.

\bibitem{Gom1}
R. Gompf,
{\it More Cappell-Shaneson spheres are standard},
Algebr.  Geom. Topol. {\bf10} (2010), 1665--1681.


\bibitem{GS99}
R. Gompf and A. Stipsicz,
{\it 4-manifolds and Kirby calculus}, Graduate Studies in Mathematics {\bf20}, AMS, Providence,
RI, 1999.


\bibitem{Gordon}
C. Gordon,
{\it Knots in the 4-sphere}, Comment. Math. Helv. {\bf51} (1976), 585-596.


\bibitem{Melvin}
P. Melvin,
{\it Blowing up and down in 4-manifolds}, 
Ph.D. Thesis, Berkeley, 1977.


\bibitem{Nash}
D. Nash,
{\it New Homotopy 4-Spheres},
arXiv:1101.2981 

\bibitem{Tange}
M. Tange,
{\it On Nash's 4-sphere and Property 2R},
arXiv:1102.3207 

\end{thebibliography}

\end{document}